\documentclass[11pt,twoside,a4paper]{amsart}
\usepackage[french]{babel}
\usepackage{amssymb,bbm}
\usepackage{amsmath}
\usepackage{amsthm}
\usepackage{hyperref}
\usepackage[dvips]{graphics}
\usepackage{graphicx}
\usepackage{latexsym}
\usepackage[normal]{subfigure}
\input{epsf.sty}
\include{psfig}
\usepackage[all]{xy}
\usepackage{eufrak}
\usepackage{mathrsfs}
\usepackage{lipsum}
\usepackage{mdframed}
\usepackage{perpage}
\usepackage{amsfonts}
\usepackage{url,color,srcltx}
\begin{document}
\def\K{\mathbb{K}}
\def\R{\mathbb{R}}
\def\C{\mathbb{C}}
\def\Z{\mathbb{Z}}
\def\Q{\mathbb{Q}}
\def\D{\mathbb{D}}
\def\N{\mathbb{N}}
\def\T{\mathbb{T}}
\def\P{\mathbb{P}}
\def\A{\mathscr{A}}
\def\CC{\mathscr{C}}
\renewcommand{\theequation}{\thesection.\arabic{equation}}
\newtheorem{theorem}{Th{\'e}or{\`e}me}[section]
\newtheorem{lemma}{Lemme}[section]
\newtheorem{corollary}{Corollaire}[section]
\newtheorem{prop}{Proposition}[section]
\newtheorem{definition}{D{\'e}finition}[section]
\newtheorem{remark}{Remarque}[section]
\newtheorem{example}{Exemple}[section]
\newtheorem{notation}{Notation}
\newtheorem{con}{Cons{\'e}quence}
\bibliographystyle{plain}

\title[R{\'e}solution du $\partial \bar{\partial}$ ~~]{\textbf {R{\'e}solution du $\partial \bar{\partial}$ pour les formes diff{\'e}rentielles ayant une valeur au bord au sens des courants d{\'e}finies sur un domaine L{\'e}vi-plat non born{\'e} de} $\mathbb{C}^{n}$ } 
\author[ S.\  Sambou   \& S.\  Sambou \& W. O.\ Ingoba ]
{Souhaibou Sambou  \& Salomon Sambou \& Winnie Ossete Ingoba}

\address{D{\'e}partement de Math{\'e}matiques\\UFR des Sciences et Technologies \\ Universit{\'e} Assane Seck de Ziguinchor, BP: 523 (S{\'e}n{\'e}gal)}

\email{sambousouhaibou@yahoo.fr  \& ssambou@univ-zig.sn \& wnnossete@gmail.com }

\subjclass{}

\maketitle
    \begin{abstract}
On r{\'e}sout le $\partial \bar{\partial}$ pour les formes diff{\'e}rentielles admettant une valeur au bord au sens des courants d{\'e}finies sur un domaine L{\'e}vi-plat non born{\'e} de $\mathbb{C}^n$ dont son compl{\'e}mentaire est aussi L{\'e}vi-plat et non born{\'e} .
\vskip 2mm
\noindent
{\normalsize A}{\tiny BSTRACT.}
We solve the $\partial \bar{\partial}$-problem for a form with distribution boundary value on a Levi flat unbounded domain of $\mathbb{C}^n$ with the complementary is also Levi flat and unbounded.
\vskip 2mm
\noindent
\keywords{{\bf Mots cl{\'e}s:}  L'op{\'e}rateur $\partial \bar{\partial}$, Cohomologie de De Rham, Courant prolongeable, Valeur au bord, domaine L{\'e}vi-plat. }
\vskip 1.3mm
\noindent
\textit{Classification math{\'e}matique 2010~:}  32F32.
\end{abstract}        
\section*{Introduction}
Soit $\Omega \subset \subset \mathbb{C}^n$ un domaine. Dans cet article, on s'int{\'e}resse {\`a} la r{\'e}solution du $\partial \bar{\partial}$ dans le cadre des formes diff{\'e}rentielles ayant une valeur au bord au sens des courants.\\
Ayant Obtenue des r{\'e}sultats dans les cas o{\`u}:
\begin{enumerate}
\item[-] $\Omega$ est un domaine {\'e}toil{\'e} strictement pseudoconvexe de $\mathbb{C}^n$ et dans son compl{\'e}mentaire dans $[6]$.
\item[-] $\Omega$ est un domaine contractile compl{\'e}tement strictement pseudoconvexe d'une vari{\'e}t{\'e} analytique complexe et dans son compl{\'e}mentaire dans $[7]$.
\end{enumerate}  
On se demande si c'est possible d'obtenir des r{\'e}sultats dans le cas o{\`u} $\Omega$ est un domaine non born{\'e} de $\mathbb{C}^n$ dont son compl{\'e}mentaire est non born{\'e} et v{\'e}rifiant $H^j(\Omega)= H^j(b\Omega) = 0$ pour $1 \leq j \leq n-1$.\\
Pour r{\'e}pondre {\`a} cette question, la d{\'e}marche classique est de r{\'e}soudre\\ l'{\'e}quation $d u = f$ o{\`u} $u$ et $f$ sont des formes diff{\'e}rentielles ayant une valeur au bord au sens des courants et ensuite r{\'e}soudre le $\partial$ et le $\bar{\partial}$ pour les d{\'e}compositions de la solution obtenue. Il est donc int{\'e}ressant d'avoir des conditions topologiques et g{\'e}om{\'e}triques sur $\Omega$ pour obtenir des solutions du $d$, $\partial$ et du $\bar{\partial}$ pour les formes diff{\'e}rentielles ayant une valeur au bord au sens des courants. Le r{\'e}sultat obtenu dans ce sens est le suivant:
\begin{theorem} \label{AB}
Soit $\Omega = \{ z= (z_1, \cdots, z_n) \in \mathbb{C}^n : Im(z_n) > 0 \}$ un domaine, alors pour toute $(p,q)$-forme diff{\'e}rentielle $f$ de classe $C^\infty$, $d$-ferm{\'e}e admettant une valeur au bord au sens des courants et d{\'e}finie sur $\Omega$, il existe une $(p-1, q-1)$-forme diff{\'e}rentielle $g$ de classe $C^\infty$ d{\'e}finie sur $\Omega$ ayant une valeur au bord au sens des courants telle que $\partial \bar{\partial} g = f$.
\end{theorem}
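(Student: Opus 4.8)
The plan is to run the classical three-step scheme for a $\partial\bar\partial$-problem entirely inside the class of smooth forms admitting a boundary value in the sense of currents on the unbounded convex domain $\Omega=\{\mathrm{Im}\,z_n>0\}$. One may assume $p,q\ge 1$, since otherwise $(p-1,q-1)$ is not an admissible bidegree. First I would solve $du=f$ with a smooth $(p+q-1)$-form $u$ having a boundary value; then, modulo $d$-exact corrections, reduce $u$ to $u=u^{p-1,q}+u^{p,q-1}$; and finally invert one $\bar\partial$ and one $\partial$ to build $g$.

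For the first step: $\Omega$ is convex, hence contractible, so $H^j_{dR}(\Omega)=0$ for $j\ge1$ and $f=du$ for some smooth $u$. The point is to get $u$ with a boundary value, and for that I would use the explicit homotopy operator $K$ attached to the affine retraction $H(z,t)=z_0+t(z-z_0)$ of $\Omega$ onto an interior point $z_0$, put $u=Kf$, and verify by a local computation near $b\Omega=\{\mathrm{Im}\,z_n=0\}$ that the fibre integral defining $Kf$ inherits the integrability of $f$ up to the boundary, so that the current obtained by extending $Kf$ by zero to a convex (hence Stein) neighbourhood $U$ of $\overline\Omega$ is well defined. Equivalently, one may argue throughout on the extension $\widetilde f$ of $f$ by zero to $U$, for which $d\widetilde f$ is a $d$-closed current carried by $b\Omega$.

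For the second step, write $u=\sum_k u^{p+q-1-k,k}$; then $du=f$ with $f$ of pure bidegree $(p,q)$ gives $\partial u^{p-1,q}+\bar\partial u^{p,q-1}=f$ and $\partial u^{a,b}+\bar\partial u^{a+1,b-1}=0$ for every other pair $(a+1,b)$. Peeling off, from the top, the components whose second index exceeds $q$ (using solvability of $\bar\partial$ on $\Omega$ for forms with boundary value) and, from the bottom, the components whose first index exceeds $p$ (using solvability of $\partial$), each time replacing $u$ by $u-d(\cdot)$ — which does not change $du$ — I reach $u=u^{p-1,q}+u^{p,q-1}$ with $\bar\partial u^{p-1,q}=0$, $\partial u^{p,q-1}=0$ and $\partial u^{p-1,q}+\bar\partial u^{p,q-1}=f$, all terms smooth and with boundary value. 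In the third step, since $q\ge1$ solve $\bar\partial g_2=u^{p-1,q}$ and since $p\ge1$ solve $\partial g_1=u^{p,q-1}$, with $g_1,g_2$ smooth $(p-1,q-1)$-forms having a boundary value; then $g:=g_2-g_1$ satisfies $\partial\bar\partial g=\partial u^{p-1,q}+\bar\partial u^{p,q-1}=f$, using $\partial\bar\partial=-\bar\partial\partial$.

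The whole argument thus rests on resolution statements for $d$, $\bar\partial$ and $\partial$ for smooth forms admitting a boundary value on the \emph{unbounded} domain $\Omega$, and proving these is the hard part. Unboundedness removes the compactness underlying the usual $L^2$/Hörmander and sheaf-cohomological arguments, and $b\Omega$ being Levi-flat (only weakly pseudoconvex) means the results of $[6]$ and $[7]$ do not apply directly; I would instead exploit the very explicit structure $\Omega=\mathbb{C}^{n-1}\times\{\mathrm{Im}\,z_n>0\}$ together with the triviality of the cohomology of $\overline\Omega$, of its Stein neighbourhoods, and of the complementary Levi-flat domain, to build the homotopy operators from explicit integral kernels (of Cauchy and Bochner--Martinelli type) adapted to the half-space, and then check the two delicate points: that these operators send smooth forms with boundary value to smooth forms with boundary value, and that they commute appropriately with $d$, $\partial$, $\bar\partial$. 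Verifying that the boundary-value property is preserved by these non-compactly-supported integral operators, and that it survives each reduction in the second step, is where the real difficulty of the proof lies.
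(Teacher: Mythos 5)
Your reduction is exactly the scheme of the paper's own proof: solve $du=f$ in the class of smooth forms with boundary value (Th{\'e}or{\`e}me \ref{A}), split $u$ into a $(p-1,q)$-component $u_1$ and a $(p,q-1)$-component $u_2$, note that $\bar{\partial}u_1=0$ and $\partial u_2=0$ for bidegree reasons, solve $\bar{\partial}h_1=u_1$ and $\partial h_2=u_2$ (Th{\'e}or{\`e}me \ref{B} and its conjugate), and set $g=h_1-h_2$ using $\bar{\partial}\partial=-\partial\bar{\partial}$. Two differences deserve comment. First, the paper passes from $u$ to $u_1+u_2$ by a bare \emph{sans perte de g{\'e}n{\'e}ralit{\'e}}, although a general $(p+q-1)$-form has components in every bidegree $(a,b)$ with $a+b=p+q-1$; your peeling argument, which removes the extreme components by $d$-exact corrections using $\bar{\partial}$- and $\partial$-solvability in the intermediate bidegrees, supplies the justification the paper omits --- note only that it needs the resolution theorems in bidegrees $(a,b)$ beyond the $(0,r)$ case actually stated in Th{\'e}or{\`e}me \ref{B}, a point the paper itself already glosses over when it applies that theorem to $u_1$ of bidegree $(p-1,q)$. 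Second, for the hard ingredients --- resolution of $d$ and of $\bar{\partial}$ with preservation of the boundary value on the unbounded half-space --- you propose explicit cone and Cauchy/Bochner--Martinelli kernels and leave the verification open, whereas the paper proceeds via extendable currents: it identifies $[f]$ with an extendable current ($[4]$), uses the vanishing of $\check{H}^r(\Omega)$ and $\check{H}^{0,r}(\Omega)$ ($[1]$), takes an extension $S$ supported in $\bar{\Omega}$, applies the homotopy formulas $S=RS+AdS+dAS$ of $[3]$ and $S=R_\varepsilon S+A_\varepsilon F+\bar{\partial}A_\varepsilon S$ of Chirka $[2]$ to get a smooth solution, and then obtains the boundary value on the non-compact $b\Omega$ by gluing boundary values over an exhaustion by compacts, the Levi-flatness of $b\Omega$ being used to solve $\bar{\partial}_b$ on the overlaps. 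Since you explicitly defer these two resolution theorems rather than prove them, your argument for the statement itself is complete modulo them, to the same extent as the paper's.
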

\section{Pr{\'e}liminaires et notations}
\begin{definition} \label{zhou}  \item
Soit $\Omega \subset \subset \mathbb{C}^n$ un domaine {\`a} bord lisse de classe $C^\infty$ de fonction d{\'e}finissante $\rho$. Posons $\Omega_\varepsilon = \{z \in \Omega / \rho(z)<-\varepsilon\}$ o{\`u} $b\Omega_\varepsilon$ d{\'e}signe le bord de $\Omega_\varepsilon$.\\
Soit $f$ une fonction de classe $C^\infty$ sur $\Omega$. On dit que $f$ admet une valeur au bord au sens des distributions, s'il existe une distribution $T$ d{\'e}finie sur le bord $b\Omega$ de $\Omega$ telle que pour toute fonction $\varphi \in C^\infty (b\Omega)$, on ait:\\
\[ \lim_{\varepsilon \rightarrow 0} \int_{b\Omega_\varepsilon} f \varphi_\varepsilon d\sigma = <T,\varphi>\]
o{\`u} $\varphi_\varepsilon = i_{\varepsilon}^* \tilde{\varphi}$  avec $\tilde{\varphi}$ une extension de $\varphi$ \`a $\Omega$ et $i_{\varepsilon} : b\Omega_\varepsilon  \rightarrow \mathbb{C}^n$ l'injection canonique; $d\sigma$ d{\'e}signe l'{\'e}l{\'e}ment de volume.\\
Une forme diff{\'e}rentielle de classe $C^\infty$ sur $\Omega$ admet une valeur au bord au sens des courants si ses coefficients ont une valeur au bord au sens des distributions.
\end{definition}
\begin{definition}
Soit $\Omega \subset \subset \mathbb{C}^n$ un domaine. Un courant $T$ d{\'e}fini sur $\Omega$ est dit prolongeable s'il existe un courant $\tilde{T}$ d{\'e}fini sur $\mathbb{C}^n$ tel que $\tilde{T}_{\vert \Omega}= T$.
\end{definition}
\begin{notation}
Soit $\Omega \subset \subset \mathbb{C}^n$ un domaine, on note $\check{H}^r(\Omega)$ le $r^{i\grave{e}me}$ groupe de cohomologie de De Rham des courants prolongeables d{\'e}finis sur $\Omega$, $H^r(\Omega)$ le $r^{i\grave{e}me}$ groupe de cohomologie de De Rham des formes diff{\'e}rentiables de classe $C^\infty$ d{\'e}finies sur $\Omega$ et $H^r(b\Omega)$ le $r^{i\grave{e}me}$ groupe de cohomologie de De Rham des formes diff{\'e}rentiables de classe $C^\infty$ d{\'e}finies sur $b\Omega$. Le $r^{i\grave{e}me}$ groupe de cohomologie de De Rham des formes diff{\'e}rentielles ayant une valeur au bord au sens des courants sur  $\Omega$ est not{\'e} $\tilde{H}^r(\Omega)$.
\end{notation}
\section{R{\'e}solution du $d$ pour les formes diff{\'e}rentielles ayant une valeur au bord au sens des courants}
On consid{\'e}re $X = \mathbb{R}^{n+1}$ et
\begin{center}
$\Omega = \{ x \in \mathbb{R}^{n+1} : x_{n+1} > 0\} \subset \mathbb{R}^{n+1}$
\end{center}
un domaine contractile et $b\Omega = \{ x \in \mathbb{R}^{n+1} : x_{n+1} = 0\}$, son compl{\'e}mentaire $\Omega^{c} = \mathbb{R}^{n+1} \setminus \bar{\Omega} =\{ x \in \mathbb{R}^{n+1} :  x_{n+1} < 0 \}$.\\
$\Omega$ est un domaine convexe non born{\'e} et son compl{\'e}mentaire $\Omega^{c}$ est aussi convexe et non born{\'e}. On a donc $H^j(\Omega) = 0$ et $H^j(b\Omega) = 0$ pour $1 \leq j \leq n-1$.
\begin{theorem} \label{A}
Soit $f$ une $r$-forme diff{\'e}rentielle de classe $C^\infty$, $d$-ferm{\'e}e admettant une valeur au bord au sens des courants et d{\'e}finie sur $\Omega$. Alors il existe une $(r-1)$-forme diff{\'e}rentielle $g$ de classe $C^\infty$ d{\'e}finie sur $\Omega$ ayant une valeur au bord au sens des courants telle que $d g = f$.
\end{theorem}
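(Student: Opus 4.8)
The plan is to use the product structure $\Omega=\mathbb{R}^{n}\times(0,+\infty)$: I write $x=(x',t)$ with $x'=(x_1,\dots,x_n)$ and $t=x_{n+1}>0$, so that $b\Omega=\mathbb{R}^n\times\{0\}\cong\mathbb{R}^n$, and I build an \emph{explicit} homotopy operator along the $t$-direction, combine it with the classical Poincaré lemma on $\mathbb{R}^{n}$, and then check directly that this operator sends forms with distribution boundary value to forms with distribution boundary value. We may assume $r\geq 1$, otherwise the statement is empty. Decompose $f=\beta+dt\wedge\gamma$, where $\beta$ and $\gamma$ involve only $dx_1,\dots,dx_n$ and have $C^{\infty}$ coefficients on $\Omega$. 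Writing $d=d_{x'}+dt\wedge\partial_t$ and using $df=0$, one reads off the two relations $d_{x'}\beta=0$ and $\partial_t\beta=d_{x'}\gamma$.

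Next, fix $c>0$ and set
\[
(Kf)(x',t)=\int_{c}^{t}\gamma(x',s)\,ds ,
\]
an $(r-1)$-form in $dx'$ only, of class $C^{\infty}$ on $\Omega$. Since $\partial_t(Kf)=\gamma$ and $d_{x'}(Kf)=\int_{c}^{t}d_{x'}\gamma(x',s)\,ds=\int_{c}^{t}\partial_s\beta(x',s)\,ds=\beta(x',t)-\beta(x',c)$, a direct computation gives $d(Kf)=f-\beta(\cdot\,,c)$, where $\beta(\cdot\,,c)$ is the $t$-independent form with coefficients $x'\mapsto\beta_I(x',c)$. As $\beta(\cdot\,,c)$ is a $d$-closed $r$-form on $\mathbb{R}^{n}$ and $H^{r}(\mathbb{R}^{n})=0$ for $r\geq1$, there is a $C^{\infty}$ $(r-1)$-form $h$ on $\mathbb{R}^{n}$ with $d_{x'}h=\beta(\cdot\,,c)$; viewing $h$ as a $t$-independent form on $\Omega$, the form $g:=Kf+h$ is $C^{\infty}$ on $\Omega$ and satisfies $dg=f$.

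It remains to check that $g$ admits a distribution boundary value, i.e. that each of its coefficients does. For $h$ this is immediate, $h$ being smooth and independent of $t$. The coefficients of $\gamma$ are, up to sign, precisely the coefficients $f_I$ of $f$ with $n+1\in I$, hence each admits a distribution boundary value on $b\Omega=\mathbb{R}^n$; being $C^{\infty}$ on $\Omega$, the map $s\mapsto\gamma_I(\cdot\,,s)$ is $C^{\infty}$ from $(0,c]$ into $\mathcal{D}'(\mathbb{R}^{n})$ and extends continuously to $s=0$, so by Banach--Steinhaus the family $\{\gamma_I(\cdot\,,s)\}_{s\in[0,c]}$ is equicontinuous. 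Consequently $\int_{c}^{t}\gamma_I(\cdot\,,s)\,ds\longrightarrow-\int_{0}^{c}\gamma_I(\cdot\,,s)\,ds$ in $\mathcal{D}'(\mathbb{R}^{n})$ as $t\to0^{+}$; these are the coefficients of $Kf$, which therefore has a distribution boundary value, and hence so does $g$.

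\textbf{Main obstacle.} The only non-formal point is the last step: the stability of the ``valeur au bord au sens des courants'' property under the integral operator $K$. This is where one genuinely uses both the definition of the boundary value and the smoothness of $f$ in the interior of $\Omega$ (through the uniform-boundedness argument legitimizing the passage of the limit $t\to0^{+}$ under the integral sign). Everything else is the classical Poincaré lemma for the product $\mathbb{R}^n\times(0,+\infty)$ together with the vanishing of $H^{\bullet}(\mathbb{R}^n)$ in positive degree.
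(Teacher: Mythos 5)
Your proof is correct, but it follows a genuinely different route from the paper. The paper's argument is indirect: it first invokes [4] to view $f$ as an extendable current, uses the vanishing $\check{H}^r(\Omega)=0$ from [1] to solve $du=f$ in extendable currents, then upgrades $u$ to a smooth solution admitting a boundary value by means of the homotopy formula $S=RS+AdS+dAS$ of [3] (the operator $A$ not increasing the singular support), and finally glues the local boundary values of $RS+AdS$, obtained as in [6] on the pieces $\bar{B}(o,n)\cap\Omega$, across an exhaustion of the unbounded boundary. You instead exploit the product structure $\Omega=\mathbb{R}^n\times(0,+\infty)$ to write an explicit primitive by fibre integration from an interior level $t=c$, correct it by the classical Poincar\'e lemma on $\mathbb{R}^n$, and verify the boundary-value property of $Kf$ directly: the coefficients of $\gamma$ are coefficients of $f$, hence converge weakly as $s\to 0^+$, and the uniform-boundedness theorem legitimizes passing to the limit under $\int_c^t$. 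Your version is self-contained and elementary (it needs none of [1], [3], [4], [6]), and it even yields an explicit formula for the boundary value, namely $-\int_0^c\gamma(\cdot,s)\,ds+h$; its only limitation is that it is tied to the half-space geometry, whereas the paper's machinery is designed to transfer to any unbounded domain for which the cohomology of extendable currents vanishes and a regularizing homotopy formula is available. Two small points you share with the paper rather than resolve: the definition of valeur au bord is stated in the paper for bounded domains with test functions in $C^\infty(b\Omega)$, so your (natural) reading with compactly supported test functions on $b\Omega\cong\mathbb{R}^n$ should be made explicit; and the case $r=0$ is degenerate, as you note.
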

\begin{proof}
D'apr{\`e}s $[4]$, si $f$ est une forme diff{\'e}rentielle ayant une valeur au bord au sens des courants sur $\Omega$ alors $[f]$ est un courant prolongeable. D'apr{\`e}s $[1]$, $\check{H}^r(\Omega) = 0$, il existe un $(r-1)$-courant prolongeable $u$ d{\'e}fini sur $\Omega$ tel que $du = f$. Soit $S$ une extension sur $\mathbb{R}^{n+1}$ de $u$ {\`a} support dans $\bar{\Omega}$, consid{\'e}rons le courant $F$ d{\'e}fini par $F = dS$ qui est un prolongement de $f$ sur $\mathbb{R}^{n+1}$.  D'apr{\`e}s ($[3]$ page $40$) on a 
\begin{center}
$S = RS + AdS + dAS$. 
\end{center}
Or $dS = F$
$\Rightarrow$ $dS = d(RS + AdS) = F$ donc $(RS + AdS)_{\vert \Omega}$ est une autre solution de l'{\'e}quation $du = f$. Or $(RS)$ est une forme diff{\'e}rentielle de classe $C^\infty$ sur $\bar{\Omega}$ donc admet une valeur au bord au sens des courants. Puisque $A$ n'augmente pas le support singulier et $dS_{\vert \Omega}$ est de classe $C^\infty$ alors $AdS_{\vert \Omega}$ est aussi de classe $C^\infty$. Donc la solution $RS+AdS$ est de classe $C^\infty$ sur $\Omega$. Il reste \`a montrer que $AdS$ admet une valeur au bord au sens des courants sur $\Omega$. 
Comme $\bar{\Omega}$ n'est pas born{\'e}, on consid{\'e}re la boule $\bar{B}(o,r)$ de $\mathbb{R}^{n+1}$ avec $\bar{B}(o,r) \cap b\Omega \neq \emptyset$.\\
 Donc $dS_{\vert (\bar{B}(o,r) \cap \bar{\Omega})}$ est un courant prolongeable d'ordre fini et $AdS_{\vert (\bar{B}(o,r) \cap \Omega)}$ admet une valeur au bord au sens des courants comme dans $[6]$.\\ Ainsi $(RS + AdS)_{\bar{B}(o,r) \cap \Omega)}$ admet une valeur au bord au sens des courants et $d(RS + AdS)_{\vert (\bar{B}(o,r) \cap \Omega)} = f$.\\
Soit $(\bar{B}(o,n))_{n \in \mathbb{N}}$ une famille de boule de $\mathbb{R}^{n+1}$ avec $\forall$ $n \in \mathbb{N}$, $\bar{B}(o,n) \cap b\Omega \neq \emptyset$ et $b\Omega \subset \displaystyle{\cup_{n \in \mathbb{N}}} \bar{B}(o,n)$. On a sur chaque $\bar{B}(o,n) \cap \Omega$, $RS + AdS$ admet une valeur au bord $V_n$ au sens des courants sur $\bar{B}(o,n) \cap b\Omega$.\\
Sur $\bar{B}(o,n+1) \cap \Omega$, $RS + AdS$ admet une valeur au bord $V_{n+1}$ au sens des courants sur $\bar{B}(o,n+1) \cap b\Omega$. Donc $d(V_{n+1}- V_n) = 0$ sur $\bar{B}(o,n) \cap b\Omega$. On a
\begin{center}
$\bar{B}(o,n) = \{x \in \mathbb{R}^{n+1} /  x_1^2 + x_2^2+ \cdots + x_{n+1}^2 \leq 1 \}$\\
\vspace{0,2cm}
$\bar{B}(o,n) \cap b\Omega = \{x \in \mathbb{R}^{n} /  x_1^2 + x_2^2+ \cdots + x_{n}^2 \leq 1 \}$
\end{center}
donc $\forall$ $n \in \mathbb{N}$, $\bar{B}(o,n) \cap b\Omega$ est un convexe de $\mathbb{R}^n$, alors il existe un $(0,r-1)$-courant $h_n$ d{\'e}fini sur $\bar{B}(o,n) \cap b\Omega$ tel que $V_{n+1}- V_n =  dh_n$.\\
Soit $\chi \in C^\infty(b\Omega)$  qui est identiquement {\'e}gale {\`a} $1$ sur $\bar{B}(o,n-1) \cap b\Omega$ et {\`a} support compact sur $\bar{B}(o,n+1) \cap b\Omega$.\\
\begin{center}
$V_{n+1} - d(1-\chi)h_n =  V_n + d( \chi h_n)$  sur $\bar{B}(o,n) \cap b\Omega$.
\end{center}
Posons $T_{n+1} = V_{n+1} - d(1 - \chi)h_n$ et $T_n = V_n + d(\chi h_n)$.\\
Ainsi 
\begin{center}
\[T =\lim_{n \rightarrow +\infty} T_{n}\]
\end{center}
est une valeur au bord de $RS + AdS$ au sens des courants sur $b\Omega$.\\
Posons $g =(RS + AdS)_{\vert \Omega}$, alors $g$ est une $(0,r-1)$-forme diff{\'e}rentielle de classe $C^\infty$ d{\'e}finie sur $\Omega$ ayant une valeur au bord au sens des courants et $dg = f$.
\end{proof}
\section{ R{\'e}solution du $\partial \bar{\partial}$ pour les formes diff{\'e}rentielles ayant une valeur au bord au sens des courants}
Consid{\'e}rons le cas o{\`u}\\
\begin{center}
$\Omega = \{ z= (z_1, \cdots, z_n) \in \mathbb{C}^n : Im(z_n) > 0 \}$
\end{center}
On donne le r{\'e}sultat suivant de r{\'e}solution du $\bar{\partial}$ pour les formes diff{\'e}rentielles ayant une valeur au bord au sens des courants:
\begin{theorem} \label{B}
Soient $\Omega = \{ z= (z_1, \cdots, z_n) \in \mathbb{C}^n : Im(z_n) > 0 \}$ et $f$ une $(0,r)$-forme diff{\'e}rentielle de classe $C^\infty$, $\bar{\partial}$-ferm{\'e}e admettant une valeur au bord au sens des courants et d{\'e}finie sur $\Omega$ avec $1 \leq r \leq n$. Il existe une $(0, r-1)$-forme diff{\'e}rentielle $g$ de classe $C^\infty$ d{\'e}finie sur $\Omega$ ayant une valeur au bord au sens des courants telle que $\bar{\partial} g = f$.
\end{theorem}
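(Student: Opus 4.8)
The plan is to follow, mutatis mutandis, the scheme of the proof of Theorem~\ref{A}, replacing the exterior differential $d$ by $\bar{\partial}$ and the De Rham homotopy by its $\bar{\partial}$-counterpart. First, since $f$ admits a distribution boundary value on $\Omega$, the current $[f]$ is prolongeable (as in $[4]$). The domain $\Omega = \{ z \in \mathbb{C}^n : \operatorname{Im}(z_n) > 0 \}$ is convex and unbounded with convex unbounded complement, hence pseudoconvex with pseudoconvex complement; consequently the Dolbeault cohomology of prolongeable currents on $\Omega$ vanishes in bidegree $(0,r)$ for $1 \le r \le n$ (the $\bar{\partial}$-analogue of the vanishing $\check{H}^r(\Omega)=0$ used in Theorem~\ref{A}, cf. $[1]$). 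This provides a prolongeable $(0,r-1)$-current $u$ on $\Omega$ with $\bar{\partial}u = f$.

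Next I would extend $u$ to a current $S$ on $\mathbb{C}^n$ with support contained in $\bar{\Omega}$ and set $F = \bar{\partial}S$, which is a prolongation of $f$ to $\mathbb{C}^n$. Applying a Bochner--Martinelli--Koppelman homotopy formula on $\mathbb{C}^n$ (the $\bar{\partial}$-version of the formula of $[3]$, page $40$) one obtains
\[
S = RS + A\bar{\partial}S + \bar{\partial}AS,
\]
with $R$ regularizing and $A$ not increasing the singular support. Since $\bar{\partial}S = F$, the current $(RS + A\bar{\partial}S)_{\vert \Omega}$ is again a solution of $\bar{\partial}u = f$; moreover it is of class $C^\infty$ on $\Omega$, because $RS$ is smooth and $\bar{\partial}S_{\vert \Omega} = f$ is smooth, so $A\bar{\partial}S_{\vert \Omega}$ is smooth as well.

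It then remains to show that $RS + A\bar{\partial}S$ admits a distribution boundary value on $b\Omega = \{ z \in \mathbb{C}^n : \operatorname{Im}(z_n) = 0 \}$. I would exhaust $b\Omega$ by balls $\bar{B}(o,n)$, $n \in \mathbb{N}$, with $\bar{B}(o,n)\cap b\Omega \neq \emptyset$ and $b\Omega \subset \bigcup_{n\in\mathbb{N}} \bar{B}(o,n)$. On each bounded piece $\bar{B}(o,n)\cap\bar{\Omega}$ the current $\bar{\partial}S$ is prolongeable of finite order, so $A\bar{\partial}S$, and hence $RS + A\bar{\partial}S$, admits a distribution boundary value $V_n$ on $\bar{B}(o,n)\cap b\Omega$, exactly as in $[6]$. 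The $V_n$ satisfy $\bar{\partial}_b(V_{n+1}-V_n) = 0$ on $\bar{B}(o,n)\cap b\Omega$, $\bar{\partial}_b$ being the operator induced on the (Levi-flat) boundary; since $\bar{B}(o,n)\cap b\Omega$ is convex there is a current $h_n$ on it with $V_{n+1}-V_n = \bar{\partial}_b h_n$. Choosing $\chi \in C^\infty(b\Omega)$ equal to $1$ on $\bar{B}(o,n-1)\cap b\Omega$ and compactly supported in $\bar{B}(o,n+1)\cap b\Omega$, one sets $T_{n+1} = V_{n+1} - \bar{\partial}_b\big((1-\chi)h_n\big)$ and $T_n = V_n + \bar{\partial}_b(\chi h_n)$; these agree on the overlaps, and $T = \lim_{n\to+\infty} T_n$ is a distribution boundary value of $RS + A\bar{\partial}S$ on $b\Omega$. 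Then $g = (RS + A\bar{\partial}S)_{\vert \Omega}$ is a $C^\infty$ $(0,r-1)$-form on $\Omega$ with a distribution boundary value satisfying $\bar{\partial}g = f$.

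The step I expect to be the main obstacle is this last one: showing that the regularized solution genuinely acquires a distribution boundary value and that the local boundary values patch across the unbounded boundary. This rests on the local result of $[6]$ (distribution boundary value of $A$ applied to a finite-order prolongeable current over a bounded domain), on the solvability of the induced tangential operator on the convex slices $\bar{B}(o,n)\cap b\Omega$, and on a Mittag--Leffler argument with the cutoffs $\chi$ to glue the $V_n$ into one current $T$; one must also ensure the Dolbeault vanishing for prolongeable currents is available in every degree $1 \le r \le n$ on this non-compact $\Omega$, which is where the hypothesis on the complement is used.
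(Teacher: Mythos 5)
Your proposal follows essentially the same route as the paper's own proof: prolongeability of $[f]$ via $[4]$, vanishing of the Dolbeault cohomology of prolongeable currents via $[1]$, a $\bar{\partial}$-homotopy formula (the paper uses Chirka's regularized version $S = R_\varepsilon S + A_\varepsilon F + \bar{\partial}A_\varepsilon S$ from $[2]$, with $A_\varepsilon$ equal to the Bochner--Martinelli operator modulo a smooth term), local boundary values on an exhaustion as in $[5]$, and the same cutoff--patching argument along the boundary. The only cosmetic difference is that the paper invokes the Levi-flatness of $b\Omega$ (rather than convexity of the slices) to solve $\bar{\partial}_b$ on each piece of the boundary.
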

\begin{proof}
D'apr{\`e}s $[4]$, $[f]$ est un courant prolongeable. D'apr{\`e}s $[1]$, on a $\check{H}^{0,r}(\Omega) = 0$ donc il existe un courant prolongeable $u$ d{\'e}fini sur $\Omega$ tel que $\bar{\partial}u = f$. Soit $S$ une extension {\`a} $\mathbb{C}^n$ de $u$ {\`a} support dans $\bar{\Omega}$, consid{\'e}rons le courant $F$ d{\'e}fini par $F = \bar{\partial}S$ qui est un prolongement de $f$ {\`a} $\mathbb{C}^n$. D'apr{\`e}s la formule du $\bar{\partial}$-homotopie de $[2]$, on a 
\begin{center}
$S = R_\varepsilon S + A_\varepsilon F + \bar{\partial}A_\varepsilon S$.
\end{center}
$\Rightarrow$ $\bar{\partial} S = \bar{\partial}(R_\varepsilon S + A_\varepsilon F) = F$.
Ainsi $(R_\varepsilon S + A_\varepsilon F)_{\vert \Omega}$ est une autre solution de l'{\'e}quation $\bar{\partial} u = f$. $(R_\varepsilon S)$ est une forme diff{\'e}rentielle de classe $C^\infty$ {\`a} support dans $\bar{\Omega}$ donc admet une valeur au bord au sens des courants. L'op{\'e}rateur $A_{\varepsilon}$ est modulo un terme lisse qui est l'op{\'e}rateur $K$ de Bochner-Martinelli et puisque $f$ est de classe $C^\infty$ alors  $A_{\varepsilon} \bar{\partial}S_{\vert \Omega}$ est de classe $C^\infty$.\\
Il reste \`a montrer que $A_\varepsilon \bar{\partial}S$ restreinte {\`a} $\Omega$ admet une valeur au bord au sens des courants sur $\Omega$. Comme $\bar{\Omega}$ n'est pas born{\'e}, on consid{\'e}re un compact $K \subset \mathbb{C}^n$ avec $\overset{\circ}{K} \neq \emptyset$ et $\overset{\circ}{K} \cap b\Omega \neq \emptyset$. Donc $\bar{\partial}S_{\vert (K \cap \bar{\Omega})}$ est un courant prolongeable d'ordre fini et $A_\varepsilon \bar{\partial}S_{\vert (\overset{\circ}{K} \cap \Omega)}$ admet une valeur au bord au sens des courants comme dans $[5]$. Ainsi $(R_\varepsilon S + A_\varepsilon \bar{\partial}S)_{\vert (\overset{\circ}{K} \cap \Omega)}$ admet une valeur au bord au sens des courants et $\bar{\partial}(R_\varepsilon S + A_\varepsilon \bar{\partial}S)_{\vert (\overset{\circ}{K} \cap \Omega)} = f$.\\
Soit $(K_n)_{n \in \mathbb{N}}$ une famille exhaustive de compacts de $\mathbb{C}^n$ avec $\forall$ $n \in \mathbb{N}$ $\overset{\circ}{K}_n \neq \emptyset$ , $\overset{\circ}{K}_n \cap b\Omega \neq \emptyset$ et $b\Omega \subset \displaystyle{\cup_{n \in \mathbb{N}}} K_n$. On a sur chaque $\overset{\circ}{K}_n \cap \Omega$, $R_\varepsilon S + A_{\varepsilon} \bar{\partial}S$ admet une valeur au bord $U_n$ au sens des courants sur $\overset{\circ}{K}_n \cap b\Omega$.\\
Sur $\overset{\circ}{K}_{n+1} \cap \Omega$, $R_\varepsilon S + A_{\varepsilon} \bar{\partial}S$ admet une valeur au bord $U_{n+1}$ au sens des courants sur $\overset{\circ}{K}_{n+1} \cap b\Omega$.\\
Donc $\bar{\partial}_b(U_{n+1}- U_n) = 0$ sur $\overset{\circ}{K}_n \cap b\Omega$. Puisque le bord est L{\'e}vi-plat, alors sur chaque $\overset{\circ}{K}_n \cap b\Omega$, il existe un $(0,r-1)$-courant $V_n$ tel que $U_{n-1}- U_n =  \bar{\partial}_b V_n$
Soit $\chi \in C^\infty(b\Omega)$  qui est identiquement {\'e}gale {\`a} $1$ sur $\overset{\circ}{K}_{n-1} \cap b\Omega$ et {\`a} support compact sur $\overset{\circ}{K}_{n+1} \cap b\Omega$.\\
\begin{center}
$U_{n+1} - \bar{\partial}_b(1-\chi)V_n =  U_n + \bar{\partial}_b( \chi V_n)$  sur $\overset{\circ}{K}_n \cap b\Omega$.
\end{center}
Posons $h_{n+1} = U_{n+1} - \bar{\partial}_b(1 - \chi)V_n$ et $h_n = U_n + \bar{\partial}_b (\chi V_n)$.\\
Ainsi
\begin{center}
\[h =\lim_{n \rightarrow +\infty} h_{n}\]
\end{center}
est une valeur au bord de $R_\varepsilon S + A_\varepsilon \bar{\partial}S$ au sens des courants sur $b\Omega$.\\
Posons $g =(R _\varepsilon S + A_\varepsilon \bar{\partial}S)_{\vert \Omega}$, alors $g$ est une $(0,r-1)$-forme diff{\'e}rentielle de classe $C^\infty$ d{\'e}finie sur $\Omega$ ayant une valeur au bord au sens des courants et $\bar{\partial} g = f$.
\end{proof}
\begin{example}
Exemple d'une famille exhaustive compact de $\mathbb{C}^n$ est\\ $(\bar{B}(o,n))_{n \in \mathbb{N}}$.
\end{example}
\begin{proof}{(D{\'e}monstration du th{\'e}or{\`e}me \ref{AB})} \\
Soit $f$ une $(p,q)$-forme diff{\'e}rentielle de classe $C^\infty$, $d$-ferm{\'e}e d{\'e}finie sur $\Omega$ ayant une valeur au bord au sens des courants. D'apr{\`e}s le th{\'e}or{\`e}me \ref{A}, $\tilde{H}^{p+q}(\Omega) = 0$ alors il existe une $(p+q-1)$-forme diff{\'e}rentielle $u$ de classe $C^\infty$ ayant une valeur au bord au sens des courants telle que $du = f$.\\
Sans perte de g{\'e}n{\'e}ralit{\'e}, $u$ se d{\'e}compose en une $(p-1,q)$-forme diff{\'e}rentielle $u_1$ de classe $C^\infty$ ayant une valeur au bord au sens des courants et en une $(p,q-1)$-forme diff{\'e}rentielle $u_2$ de classe $C^\infty$ ayant une valeur au bord au sens des courants. On a\\
\begin{center}
$du = d(u_1 + u_2)= du_1 + du_2 = f$.
\end{center}
Comme $d = \partial + \bar{\partial}$, on a pour des raisons de bidegr{\'e} $\partial u_2 = 0$ et $\bar{\partial} u_1 = 0$. D'apr{\`e}s le th{\'e}or{\`e}me \ref{B}, il existe une $(p-1,q-1)$-forme diff{\'e}rentielle $h_1$ de classe $C^\infty$ ayant une valeur au bord au sens des courants telle que $\bar{\partial} h_1 = u_1$ et une $(p-1,q-1)$-forme diff{\'e}rentielle $h_2$ de classe $C^\infty$ ayant une valeur au bord au sens des courants telle que $\partial h_2 = u_2$. On a donc $\partial u_1 + \bar{\partial} u_2 =f$\\
\begin{center}
$\Rightarrow$ $\partial \bar{\partial}h_1 + \bar{\partial}\partial h_2 =f$\\
$\partial \bar{\partial}(h_1 - h_2) = f$.
\end{center}
Posons $g = h_1 -h_2$, alors $g$ est une $(p-1,q-1)$-forme diff{\'e}rentielle de classe $C^\infty$, d{\'e}finie sur $\Omega$ ayant une valeur au bord au sens des courants et $\partial \bar{\partial} g = f$.
\end{proof}

\end{document}